\newcommand{\bb}{\mathbb}
\newcommand{\C}{\bb C}
\newcommand{\h}{\bb H}
\newcommand{\R}{\bb R}
\newcommand{\G}{\bb G}
\newcommand{\s}{\bb S}
\newcommand{\vv}{\mathbf{v}}
\newcommand{\hh}{\mathcal H}
\newcommand{\kk}{\underline{k}}
\newcommand{\area}{\operatorname{area}}
\newcommand{\vol}{\operatorname{vol}}
\newcommand{\slope}{\operatorname{slope}}
\newcommand{\Real}{\operatorname{Re}}
\newcommand{\Imag}{\operatorname{Im}}
\newcommand{\ath}{\operatorname{ath}}
\newcommand{\Li}{\operatorname{Li}}
\newcommand{\SL}{\operatorname{SL}}
\newcommand{\vect}[2]{%
  \Bigl(\begin{array}{@{}c@{}}#1\\ #2\end{array}\Bigr)%
}
\newcommand{\mat}[4]{%
  \begin{bmatrix}#1 & #2\\ #3 & #4\end{bmatrix}%
}
\newcommand{\om}{\omega}
\newcommand{\omL}{\omega_\mathrm{L}}
\newcommand{\OmX}{\Omega_X}
\newcommand{\Om}{\Omega}
\newcommand{\La}{\Lambda}
\newcommand{\alphabar}{\overline{\alpha}}
\renewcommand{\phi}{\varphi}
\newcommand{\phibar}{{\overline{\phi}}}
\newcommand{\minuszero}{\backslash\{0\}}
\newcommand{\x}{\mspace{1mu}}
\newcommand{\tb}{{\scriptsize\textbullet}}
\newcommand{\tbx}{{\scriptsize\textbullet}\phantom{--}}
\renewcommand{\quad}{\hspace*{2mm}}
\newcommand{\psp}{\par\smallskip\par}
\newcommand{\pmp}{\par\medskip\par}
\newcommand{\appendixmode}{%
  \setcounter{section}{0}
  \renewcommand{\thesection}{\Alph{section}}%
}
\newtheorem{Theorem}{Theorem}
\numberwithin{Theorem}{section}
\newtheorem{lemma}[Theorem]{Lemma}
\newtheorem*{ques}{Question}
\theoremstyle{remark}
\newtheorem{Remark}{Remark}
\newtheorem*{Notation}{Notation}
\numberwithin{equation}{section}
\begin{document}

\title[Golden L slope gap distribution]
{The gap distribution of slopes on the golden L}
\author{Jayadev S.~Athreya}
\author{Jon Chaika}
\author{Samuel Leli\`evre}
\email{jathreya@illinois.edu}
\email{chaika@math.utah.edu}
\email{samuel.lelievre@gmail.com}
\address{%
  Department of Mathematics,
  University of Illinois Urbana-Champaign,
  1409 W.\ Green Street,
  Urbana, IL 61801, USA%
}
\address{
  Department of Mathematics,
  University of Utah,
155 South 1400 East, JWB 233
Salt Lake City, Utah 84112-0090 %
}
\address{%
  Laboratoire de math\'ematique d'Orsay,
  UMR 8628 CNRS / Universit\'e Paris-Sud, 
  B\^atiment 425,
  91405 Orsay cedex,
  France%
}

\thanks{J.S.A.\ partially supported by NSF grant DMS 1069153, and NSF grants DMS 1107452, 1107263, 1107367 ``RNMS: GEometric structures And Representation varieties" (the GEAR Network).}
\thanks{J.C.\ partially supported by NSF postdoctoral fellowship DMS 1004372}
\thanks{S.L.\ partially supported by ANR projet blanc GeoDyM}

\begin{abstract}
We give an explicit formula for the limiting gap distribution
of slopes of saddle connections on the golden L,
or any translation surface in its $\SL(2, \R)$-orbit,
in particular the double pentagon.
This is the first explicit computation of the distribution of gaps
for a flat surface that is not a torus cover.
\end{abstract}

\maketitle

%
%
%


\section{Introduction}
\label{sec:intro}

\subsection{The golden L}
\label{subsec:golden}

The golden L is a \emph{translation surface}
obtained from an L-shaped polygon (with length ratios
equal to the golden ratio $\phi = \frac{1+\sqrt{5}}{2}$)
by gluing opposite sides by horizontal and vertical translations
(see Figure~\ref{fig:golden:L}).
It has genus two and a single cone-type singularity
of angle $6\pi$ resulting from the identification of all vertices
of the L-shaped polygon to a single point after the side gluings.

In this paper, we describe an explicit computation of the distribution
of gaps between slopes of \emph{saddle connections} on the golden L,
where a saddle connection is a straight line trajectory
starting and ending at the cone point of the golden L.
This can be viewed as a geometric generalization of
the gap distribution for Farey fractions~\cite{Athreya}.

\begin{figure}[ht]

\begin{tikzpicture}[scale=2]

%
%
%

%
%
%

\tikzstyle{every circle node}=[fill=black,inner sep=1.25pt]

\begin{scope}[thick] 
\draw (0,0) -- (1,0) node [circle] {};
\draw (1,0) -- (1.618, 0) node [circle] {};
\draw (1.618, 0) -- (1.618, 1) node [circle] {};
\draw (1.618, 1) -- (1,1) node [circle] {};
\draw (1,1) -- (1, 1.618) node [circle] {};
\draw (1, 1.618) -- (0, 1.618) node [circle] {};
\draw (0, 1.618) -- (0, 1) node [circle] {};
\draw (0, 1) -- (0,0) node [circle] {};
\end{scope}


\draw [very thin] (0,0) -- (1.618, 0.5);
\draw [very thin] (0,0.5) -- (1.618, 1);

\begin{scope}[xshift=-1.25mm,<->,shorten <=1pt,shorten >=1pt]
\draw (0,0) -- node [left] {\footnotesize \strut$1$} (0,1);
\draw (0,1) -- node [left] {\footnotesize \strut$\phibar$} (0,1.618);
\end{scope}

\begin{scope}[yshift=-1.25mm,<->,shorten <=1pt,shorten >=1pt]
\draw (0,0) -- node [below=-1mm] {\footnotesize \strut$1$} (1,0);
\draw (1,0) -- node [below=-1mm] {\footnotesize \strut$\phibar$} (1.618,0);
\end{scope}

\end{tikzpicture}

\caption{Left: The golden L has sides glued pairwise by 
horizontal and vertical translations. All vertices, marked by dots, become a
single point after side gluings.
A saddle connection crossing a pair of identified sides is shown.}
\label{fig:golden:L}

\end{figure}
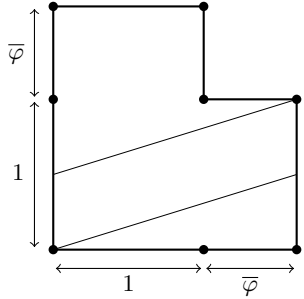

\subsection{Translation structure}
\label{subsec:trans}

The side identifications of the golden L are by translations, which are
holomorphic, so it inherits a Riemann surface structure, as well as a
holomorphic one-form (from the form $dz$ in the plane,
which is preserved by translations).
This one-form $\om$ on this Riemann surface has a single zero,
of order two, at the cone-point of the golden L.

We identify the golden L to the one-form $\omL$,
the underlying Riemann surface being implied.
The golden L and the closely related double pentagon have been
popular objects of study and a testing ground for various properties
of translation surfaces~\cite{
 Davis, DFT}.

\subsection{Saddle connections and holonomy}
\label{subsec:sc} 

Associated to each oriented saddle connection $\gamma$ is
a \emph{holonomy vector}, which we will call a saddle connection vector,
$$
\vv_{\gamma} = \int_{\gamma} \om \in \C,
$$
which records how far and in what direction $\gamma$ travels.
The set of saddle connection vectors,
$$
\La_\om = \{\,\vv_{\gamma}: \gamma \mbox{ an oriented saddle connection
on the golden L}\,\} \subset \C,
$$
is a discrete subset of the plane with
\emph{quadratic asymptotics}~\cite{Masur}.  Veech~\cite{Veech} showed that
$$
|\La_\om \cap B(0, R)| \simeq c \x R^2,
$$
where $a(R) \simeq b(R)$ indicates that the ratio
of $a(R)$ and $b(R)$ goes to $1$ as $R \to \infty$,
and where $c = 3\x\pi/10$ is the volume of $\h^2/\Gamma$,
where $\Gamma = \triangle(2, 5, \infty)$ is the Hecke $(2, 5, \infty)$
triangle group, which is the \emph{Veech group}
of the golden L (see \S\,\ref{subsec:veech}).


\subsection{Slopes and uniform distribution}
\label{subsec:gap}

The object of this paper is to study the distribution of
the set of \emph{slopes} of $\La_\om$.
Since the set $\La_\om$ is symmetric about the  coordinate axes
as well as about the first and second diagonals,
it is enough to study slopes of vectors in the first quadrant below
the first diagonal,
$$
\s = \left\{ \slope(z) = \frac{\Imag(z)}{\Real(z)}:
z \in \La_\om,\ 0 \le \Imag(z) \le \Real(z)\right \} \subset [0, 1].
$$
We view $\s$ as the union of the nested sets
$$
\s_R = \left\{ \slope(z) = \frac{\Imag(z)}{\Real(z)}:
z \in \La_\om,\ 0 \le \Imag(z) \le \Real(z) \le R \right\}.
$$
In~\cite{Veech}, Veech shows that not only the cardinality
$N(R) = |\s_R|$ grows quadratically (as discussed above),
but also the sets $\s_R$ become \emph{equidistributed}
in $[0,1]$ with respect to Lebesgue measure.
That is, the uniform probability measure on the finite set $\s_R$
weak-$*$-converges to the Lebesgue probability measure on $[0, 1]$:
$$
\frac{1}{N(R)}\sum_{s \in \s_R} \delta_s \xrightarrow{\text{w}*} \lambda
$$
(where $\delta_s$ denotes the Dirac mass at $s$).
This result can be interpreted as saying that to the first order,
the directions of saddle connections on the golden L appear randomly.

\subsection{Gap distributions}

A finer assessment of randomness arises from
the \emph{gap distribution} of the slopes.
For $R \geq 1$ (so $\s_R$ is nonempty), index the elements of $\s_R$ 
in increasing order:
$$
0=s_R^{(0)} < s_R^{(1)} < s_R^{(2)} < \ldots < s_R^{(N(R)-1)} = 1
$$
and consider the set of scaled differences or \emph{gaps} (scaled by $R^2$
since $N(R)$ grows in $R^2$):
$$
\G_R =\left \{ R^2\left(s_R^{(i+1)}-s_R^{(i)}\right):
0 \le i < N(R)-1\right\}.
$$
We are interested in the limiting behavior of the probability measure
supported on $\G_R$, in particular, for $0 \leq a < b \leq +\infty$,
the existence and evaluation of
\begin{equation}
\label{eq:ell:gap}
\lim_{R \rightarrow \infty} \frac{\left| \G_R
\cap (a, b)\right|}{N(R)}.
\end{equation}
If the slopes were `truly random', obtained from sampling a
sequence of independent identically distributed random variables
following the uniform law on $[0,1]$, the associated gap distribution
would be \emph{exponential}, that is, the above limit would be
$e^{-a} - e^{-b}$.

\medskip

Our main result is the existence and computation of the gap distribution
(\ref{eq:ell:gap}). 

\begin{Theorem}
\label{theorem:main}
There is a limiting probability distribution function 
$f: [0, \infty) \rightarrow [0,\infty)$, with
$$
\lim_{R \rightarrow \infty}
\frac{\left| \G_R \cap (a, b)\right|}{N(R)}
= \int_{a}^{b} f(x) dx.
$$
This function $f$ is continuous, piecewise real-analytic, with seven points of
non-differentiability, and the real-analytic pieces have explicit expressions
involving usual functions.
\end{Theorem}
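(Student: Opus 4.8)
The plan is to realise the limiting gap distribution as the distribution of the first-return time of the horocycle flow to an explicit transversal in $X:=\SL(2,\R)/\Gamma$, and then to compute that distribution directly. Here $\Gamma=\triangle(2,5,\infty)$ is the Veech group, $X$ is the $\SL(2,\R)$-orbit of $\omL$ in moduli space---a finite-volume $3$-manifold with Haar probability measure $m_X$, in which $g\cdot\omL\leftrightarrow g\Gamma$ and $\La_\om$ is, as for any lattice surface, a finite union of $\Gamma$-orbits of horizontal saddle connection vectors (one contribution from each of the two horizontal cylinders of the golden~L). Let $u_t=\mat{1}{0}{t}{1}$ and $g_\tau=\mat{e^\tau}{0}{0}{e^{-\tau}}$ act on subsets of $\R^2$, hence on $X$, and put
\[
\Om:=\bigl\{\La'\in X:\ \La'\text{ contains a vector }(a,0)\text{ with }0<a\le1\bigr\}.
\]
For $v=(x,y)\in\La_\om$ with $0\le y\le x\le R$ one computes $u_{-yR^2/x}\,g_{-\log R}\,v=(x/R,0)$ with $0<x/R\le1$, and conversely these configurations are the only way the relevant orbit meets $\Om$. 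Hence, for $s\in[0,1]$, one has $s\in\s_R$ exactly when $u_{-sR^2}\,g_{-\log R}\,\La_\om\in\Om$; thus $N(R)$ equals the number of visits of the orbit segment $\{u_{-t}\,g_{-\log R}\,\La_\om:0\le t\le R^2\}$ to $\Om$, and $\G_R$ is precisely its set of gaps between consecutive visit times. Since $\La_\om$ is a finite union of $\Gamma$-orbits, $\Om$ is a semialgebraic transversal to the horocycle flow with finite cross-section measure $\mu_\Om$ (normalised by $dm_X=d\mu_\Om\,dt$ along the flow); its first-return time $\rho\colon\Om\to(0,\infty)$ is bounded below and satisfies $\int_\Om\rho\,d\mu_\Om=m_X(X)<\infty$.

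\textbf{Equidistribution.} The vertical direction of the golden~L is parabolic, with $\mat{1}{0}{\phi}{1}\in\Gamma$, so the horocycle orbit of $\omL$ is closed; applying $g_{-\log R}$, the orbit segment above is a fixed fraction of the closed horocycle through $g_{-\log R}\,\omL$, whose period $\phi R^2$ tends to infinity. By equidistribution of long closed horocycles (Sarnak), this segment equidistributes towards $m_X$ as $R\to\infty$. Packaging this in the usual way---comparing the number of returns with $\rho\in(a,b)$ to the $m_X$-measure of a flow box over $\{\rho\in(a,b)\}\subset\Om$, whose boundary is $m_X$-null, and approximating indicators by continuous functions---gives
\[
\lim_{R\to\infty}\frac{\lvert\G_R\cap(a,b)\rvert}{N(R)}=\frac{\mu_\Om\bigl(\rho^{-1}(a,b)\bigr)}{\mu_\Om(\Om)}.
\]
Thus the theorem's density is $f(x)=\mu_\Om(\Om)^{-1}\tfrac{d}{dx}\mu_\Om(\{\rho\le x\})$, the density of the push-forward of $\mu_\Om/\mu_\Om(\Om)$ under $\rho$; in particular $\int_0^\infty f=1$ and $N(R)/R^2\to\mu_\Om(\Om)$, consistent with Veech's asymptotics.

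\textbf{The renormalisation map.} It remains to compute $\Om$, $\mu_\Om$ and $\rho$ explicitly. I would coordinatise $\Om$ by a planar region $\mathcal F$ recording the length $a\in(0,1]$ of the shortest horizontal saddle connection together with the ``next'' saddle connection vector---that of smallest positive slope among those with first coordinate $\le1$---and partition $\mathcal F$ into finitely many pieces by combinatorial type: which of the two cylinders the short connection bounds, and which $\Gamma$-orbit it lies in. On $\mathcal F$ the cross-section measure $\mu_\Om$ becomes an explicit density (Lebesgue measure up to a simple rational factor from the Ambrose--Kakutani identification), and the first-return map $T\colon\mathcal F\to\mathcal F$---the golden-L analogue of the Boca--Cobeli--Zaharescu ``Farey'' map of the torus case---is obtained by shearing along $u_{-t}$ until the next horizontal connection of length $\le1$ appears: a best-approximation step governed by the parabolic generators of $\Gamma$, so that $T$ is piecewise a linear map with matrix in $\SL(2,\Z[\phi])$ and $\rho$ a corresponding explicit, piecewise algebraic function on $\mathcal F$. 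This is where the golden ratio enters the final formulas.

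\textbf{Extracting the density; the main difficulty.} For each $x>0$ the sublevel set $\{\rho\le x\}\subset\mathcal F$ is cut out by finitely many inequalities in $x$ and the coordinates, and the combinatorics of this region---which pieces of $\mathcal F$ contribute and how the bounding curves meet---changes at finitely many critical values of $x$; these are the seven points of non-differentiability. On each of the resulting intervals one integrates the explicit density over $\{\rho\le x\}$ and differentiates in $x$, obtaining a closed form for $f$ built from rational functions, square roots, logarithms and inverse trigonometric functions (as forced by the $1/(\text{linear})$-type factors in the density together with the presence of $\phi$); $f$ vanishes on the first interval, reflecting a positive minimal gap forced by the discreteness of $\La_\om$, and matching one-sided limits at the seven critical values gives continuity. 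The main difficulty is the previous step: because $\Gamma$ is a hyperbolic triangle group rather than $\SL(2,\Z)$ and the horizontal foliation has two cylinders, enumerating the combinatorial pieces of $\Om$ and pinning down $T$ and $\rho$ demands a careful, lengthy analysis of saddle connections on the golden~L; the equidistribution input is then standard, and the final extraction of $f$, though long, is mechanical.
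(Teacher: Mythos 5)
Your proposal follows essentially the same route as the paper: a Poincar\'e section for the horocycle flow on $\SL(2,\R)/\Gamma$ consisting of surfaces with a horizontal saddle connection of length at most $1$, the observation that the golden L lies on a closed horocycle so that $\s_R$ (after renormalizing by $g_{-\log R}$) becomes the visit times of a long closed horocycle to the section, Sarnak's equidistribution of long closed horocycles to identify the limit with the distribution of the return time, and then an explicit BCZ-type computation of the section, return map, and return-time function. The one caveat is that the part of the argument you defer as the ``main difficulty'' --- parametrizing the section as $\{(a,b): 0<a\le 1,\ 1-a\phi<b\le 1\}$, partitioning it into the three zones determined by which saddle connection vector ($w_1$, $w_\phi$, or $w_\infty$) realizes the next return, writing down the piecewise-rational return time, and integrating over the sublevel sets --- is precisely where the theorem's concrete content (the seven non-differentiability points and the explicit formulas) is established, so your outline is a correct plan rather than a complete proof of those specific claims.
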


\begin{figure}[ht]
\includegraphics[width=0.49\textwidth, height = 45mm]{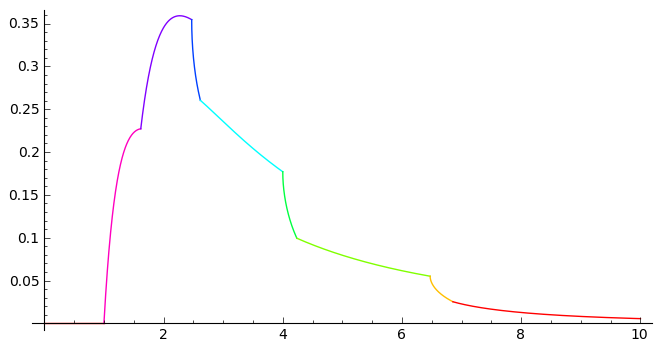}
\includegraphics[width=0.49\textwidth, height = 45mm]{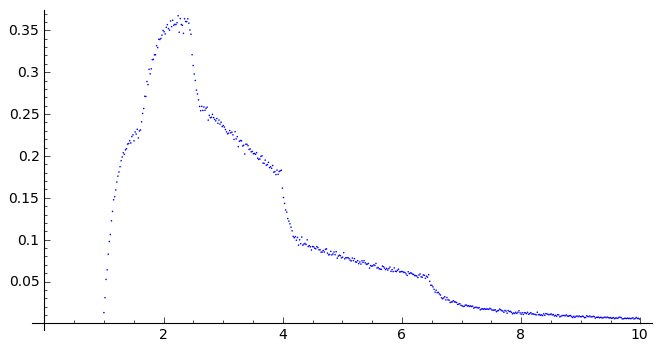}
\caption{The limiting and empirical distributions for gaps 
of saddle connection slopes on the golden L. The empirical distribution is 
for saddle connection vectors of slope at most $1$
and horizontal component less than $10^4$.}
\label{fig:goldenLdist}
\end{figure}

\medskip
\noindent
\begin{Remark}
The formulas for the real-analytic pieces of
the probability distribution function $f$ are given in
Appendix~\ref{appendix:formula}.
\end{Remark}
\begin{Remark}
The distribution has no support at $0$, in
fact $f(x) = 0$ for $0 \le x \le 1$.  The tail of the distribution is
\emph{quadratic}, that is, for $t \gg 0$, $$\int_{t}^{\infty} f(x) dx \sim
t^{-2},$$ where $\sim$ indicates that, for large enough $t$,
the ratio is bounded between two positive constants.  In particular,
as is clear from Figure~\ref{fig:goldenLdist},
the distribution is \emph{not} exponential.
\end{Remark}

\subsection{Typical surfaces}
\label{subsec:typical}

In~\cite{AChaika}, the first two authors considered gap distributions for
\emph{typical} translation surfaces, that is, surfaces in a set of full
measure for the Masur-Veech measure (or, in fact, any ergodic
$\SL(2, \R)$-invariant measure) on a connected component of a stratum of
the moduli space $\Om_g$ of genus $g \geq 2$ translation surfaces.

It was shown that the limiting distribution
exists, and is the same for almost every surface, and, as above, the tail is
quadratic.  In contrast to our setting, the distribution does have support at
$0$ (for generic surfaces for the Masur-Veech measure).  For \emph{lattice
surfaces}, of which the golden L is an example, the distribution does
\emph{not} have support at $0$, in fact, there are \emph{no} small gaps.  This
is essentially equivalent to the \emph{no small triangles} condition of
Smillie-Weiss~\cite{nosmalltri}.  However, the only explicit computations
in~\cite{AChaika} were for branched covers of tori, and relied on previous
work of Marklof-Str\"ombergsson~\cite{MS} on the space of affine lattices.  To
the best of our knowledge, the current paper gives the first explicit
computation of a gap distribution of saddle connections for a surface which is
not a branched cover of a torus.

\subsection{Strategy of proof}
\label{subsec:strategy}

We follow the work of~\cite{ACheung} where the first author and
Y.~Cheung, inspired by the work of Boca-Cobeli-Zaharescu~\cite{BCZ}
(also see~\cite{BZsurvey} for a survey),
gave an ergodic-theoretic proof of Hall's Theorem~\cite{Hall}
on the gap distribution of Farey fractions, using the horocycle flow
on the modular surface.
The key idea is to construct a Poincar\'e section for the horocycle flow
on an appropriate moduli space, and to compute the distribution of
the return time function with respect to an appropriate measure.
This strategy can be used in many different situations,
see~\cite{Athreya} for a description of some of them.


\section{Strata, $\SL(2, \R)$-action, and Veech groups}
\label{sec:strata}

In this section we relax the notations $\om$, $\La_\om$ from their
use in the previous section.

\subsection{Strata of translation surfaces}
\label{subsec:strata}

A (compact, genus $g$) \emph{translation surface} is given by a holomorphic
one-form $\om$ on a compact genus $g$ Riemann surface.  Leaving the
Riemann surface implicit, we simply refer to $\om$ as the translation surface.
More geometrically, a translation surface is given by a union of polygons
$P_1 \cup \dots \cup P_n \subset \C$, and gluings of parallel sides
by translations, such that each side is glued to exactly one other,
and the total angle at each vertex class is an integer multiple of $2 \pi$.
Since translations are holomorphic, and preserve $dz$, we obtain
a complex structure and a holomorphic differential on the glued up surface.
The zeros of the
differential are at the identified vertices with total angle greater than
$2\pi$.  A zero of order $k$ is a point with total angle $2\pi(k+1)$.  The sum
of the angle excess is $2\pi(2g-2)$,
where $g$ is the genus of the glued up surface. Equivalently the orders of 
the zeros sum up to $2g-2$.

Thus, the space of genus $g$ translation surfaces can be stratified by integer
partitions of $2g-2$.  If $\kk =(k_1, \ldots, k_s)$ is a
partition of $2g-2$, we denote by $\hh(\kk)$ the moduli space of
translation surfaces $\om$ such that the multiplicities of the zeros are
given by $k_1, \dots, k_s$.  The golden L is a genus $2$ surface
with one zero of order $2$ (total angle $6\pi$), that is, $\om \in
\hh(2)$.

\subsection{Action of $\SL(2, \R)$}
\label{subsec:sl2}

The group $\SL(2,\R)$ acts on each stratum $\hh(\kk)$ via its action
by linear maps on the plane: given a surface $\om$ glued from polygons
$P_1$, $\dots$, $P_n$ in the plane, define $g\cdot \om$ as
$g P_1 \cup \dots \cup g P_n$,
with the same side gluings for $g \cdot \om$ as for $\om$.

\subsection{Veech groups and lattice surfaces}
\label{subsec:veech}

For a partition $\kk$ of $2g-2$, with $g\geq 2$, and a typical surface
$\om \in \hh(\kk)$, the
stabilizer $\Gamma$ of $\om$ in $\SL(2, \R)$, known as the \emph{Veech group}
of $\om$, is trivial.  However, for a \emph{dense} subset, the Veech group is
a \emph{lattice} in $\SL(2, \R)$.  These surfaces are known as \emph{lattice
surfaces} or \emph{Veech surfaces}.  While these lattices are never
co-compact, Smillie~\cite{Smillie} showed that the $\SL(2, \R)$-orbit of a
translation surface $\om$ is a \emph{closed} subset of $\hh(\kk)$ if and
only if $\om$ is a lattice surface, and in this setting it can be identified
with the quotient $\SL(2, \R)/\Gamma$.  Thus, a general principle is the
following:

\medskip
\noindent
\textbf{Principle.}
\textit{If a problem about the geometry of a translation surface can be framed
in terms of its $\SL(2, \R)$ orbit, and $\om$ is a lattice surface, then the
problem can be reduced to studying the dynamics of the $\SL(2, \R)$ action on the
homogeneous space $\SL(2, \R)/\Gamma$}.

\subsection{Veech group orbits and saddle connections}
\label{subsec:vgo:and:sc}

Recall that a \emph{saddle connection} $\gamma$ on a translation surface 
$\om$ is a geodesic in the flat metric determined by $\om$ on the underlying 
Riemann surface, with both endpoints at zeros of $\om$ and no zero of $\om$ 
in its interior.
The \emph{holonomy vector} of $\gamma$ is given by
$$
\vv_{\gamma} = \int_{\gamma} \om \in \C.
$$
The set of holonomy vectors
$$
\La_\om = \{\vv_{\gamma}: \gamma \mbox{ is a saddle connection on } \om\}
\subset \C
$$
is a discrete subset of the plane, which varies equivariantly under the
$\SL(2, \R)$-action, that is $ \La_{g\om} = g\La_{\om}.$ When $\om$ is a
lattice surface, the set $\La_{\om}$ is a finite union of orbits of the Veech
group $\Gamma$ acting linearly on $\R^2\minuszero$ (see, for
example~\cite{HSsurvey}).  That is, there are vectors $\vv_1, \ldots, \vv_k
\in \R^2\minuszero$ such that
$$
\La_{\om} = \bigcup_{i=1}^k \Gamma \vv_i.
$$

In the setting of the golden L, we have two vectors
$\vv_1 = \frac{\sqrt5 - 1}{2}$ and $\vv_2 = 1$, viewed as complex numbers.
Since we are interested in the \emph{slopes} of vectors, and $\vv_1$ and
$\vv_2$ are collinear, we work with the orbit $\Gamma \vv_2$, where
$\Gamma = \Gamma(\om) = \triangle(2, 5, \infty)$.  We fix the notation $\Gamma
= \triangle(2, 5, \infty)$ in the rest of this paper, and let $\La = \Gamma
\vv_2 \subset \La_\om$.


\section{Horocycle flows and first return maps}
\label{sec:horocycle} 

In the rest of the paper, $\om$ denotes the golden L, $\Gamma$
denotes its Veech group, the $(2, 5, \infty)$ triangle group, and 
$\La$ denotes the subset $\Gamma \vv_2$ of $\La_\om$ described at
the end of section~\ref{sec:strata}.

\subsection{Horocycle flow and slope gaps}
\label{subsec:horo}
The key tool in the proof of Theorem~\ref{theorem:main} is the construction of
a first return map for the horocycle flow on the $\SL(2, \R)$-orbit of $\om$,
which, as discussed above, can be identified with the homogeneous space
$\SL(2, \R)/\Gamma$.  For our purposes, the horocycle flow is given by the
(left) action of the subgroup:
$$
\left\{h_s = \mat{1}{0}{-s}{1}: s \in \R \right\}.
$$
We also define, for $a, b \in \R$ with $a \neq 0$, the matrix
$$
g_{a, b} = \mat{a}{b}{0}{a^{-1}}.
$$

\subsubsection*{Slope gaps.}
Note that if $z \in \C$, and $\slope(z) = \Imag(z)/\Real(z)$ is the slope,
we have 
$$
\slope(h_s z) = \slope(z) - s,
$$
since $\Real(h_s z) = \Real(z)$, $\Imag(h_s z) = \Imag(z) -s \Real(z)$.
Thus, the action of $h_s$ preserves \emph{differences} in slopes.
Thus to understand the slope gaps of $\La$ (or more generally
of $g\La$ for $g \in \SL(2, \R)$), it is useful to consider
the orbit of $\La$ under $h_s$.

\subsection{A Poincar\'e section}
\label{subsec:return}

Following~\cite{Athreya, ACheung}, we build a cross-section
to the flow $h_s$ on the space $X = \SL(2, \R)/\Gamma$.
Let $\OmX \subset X$ be given by
\begin{equation}
\label{eq:cross}
\OmX = \{ g \Gamma: g \La \cap (0, 1] \neq \emptyset \},
\end{equation}
where we view $(0, 1]$ as the horizontal segment $\{0 < x \le 1 \} \subset \C$.
That is, $\OmX$ consists of surfaces in the $\SL(2, \R)$ orbit of $\om$
which contain a short (length $\le 1$) horizontal vector.

\begin{Theorem}
\label{theorem:cross}
The subset $\OmX \subset X$ is a Poincar\'e section to
the horocycle flow $h_s$ on $X$.
Moreover, the map $(a,b) \mapsto g_{a,b} \Gamma$ establishes a bijection
$$
\Om = \{ (a, b) \in \R^2: 0<a\le 1, 1-a\phi<b\le 1\} \longrightarrow \OmX.
$$
In these coordinates, the return map $T: \Om \rightarrow \Om$ is a
measure-preserving bijection, piecewise linear with countably many pieces.
The return time function $R: \Om \rightarrow \R^+$ defined by 
$$
R(a, b) = \min\{s>0: h_s g_{a,b} \Gamma \in \OmX\}
$$
is a piecewise rational function with three pieces, and is uniformly bounded
below by $1$.  We call the map $T$ the golden-L BCZ map.

\end{Theorem}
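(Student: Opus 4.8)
The plan is to verify the four assertions in turn; I would begin with the coordinate bijection, since it underlies everything else. A coset $g\Gamma$ lies in $\OmX$ precisely when $g\La = g\Gamma\vv_2$ contains a vector with zero imaginary part and real part in $(0,1]$; but any matrix of $\SL(2,\R)$ carrying $\vv_2=(1,0)$ to such a vector $(a,0)$ must be $g_{a,b}$ for some $b\in\R$, so every element of $\OmX$ is $g_{a,b}\Gamma$ with $0<a\le1$, and conversely each such $g_{a,b}$ lies in $\OmX$ because $g_{a,b}\vv_2=(a,0)$. To pin down the redundancy I would compute the upper-triangular subgroup of $\Gamma$. In the realization relevant here (the one in which $\vv_2=1$ is a saddle-connection vector and the simultaneous Dehn twist of the two horizontal cylinders of the golden L, which have equal modulus, is the parabolic $\mat{1}{\phi}{0}{1}$), $\Gamma$ is the Hecke group generated by $\mat{1}{\phi}{0}{1}$ and $\mat{0}{-1}{1}{0}$, and it contains $-I$ (the hyperelliptic involution). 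Since $\Gamma$ is discrete it contains no hyperbolic element fixing the cusp $\infty$, and $\mat{1}{\phi}{0}{1}$ is the primitive parabolic there, so the upper-triangular subgroup of $\Gamma$ is exactly $\bigl\{\pm\mat{1}{n\phi}{0}{1}:n\in\mathbb{Z}\bigr\}$. As $g_{a,b}\mat{1}{n\phi}{0}{1}=g_{a,\,b+na\phi}$ while $g_{a,b}(-I)$ has negative diagonal entries, $g_{a,b}\Gamma=g_{a',b'}\Gamma$ with $0<a,a'\le1$ forces $a=a'$ and $b\equiv b'\pmod{a\phi}$; the half-open window $1-a\phi<b\le1$ selects one representative per class, giving the bijection $\Omega\to\OmX$.

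Next I would show that $\OmX$ is a Poincar\'e section, that $R\ge1$, and that the transverse measure is Lebesgue. From $\slope(h_sz)=\slope(z)-s$ and $\Real(h_sz)=\Real(z)$ one gets that $h_sg\Gamma\in\OmX$ if and only if $s=\slope(\vv)$ for some $\vv\in g\La$ with $\Real(\vv)\in(0,1]$; hence the return times from $g\Gamma\in\OmX$ are exactly the positive such slopes. For the lower bound, take $(a,b)\in\Omega$, so $g_{a,b}\vv_2=(a,0)\in g_{a,b}\La$, and let the first positive return be realized by $\vv'=g_{a,b}\gamma_0\vv_2=(x,y)$ with $0<x\le1$, $y>0$. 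Because the area form $\det(\cdot,\cdot)$ on $\R^2$ is $\SL(2,\R)$-invariant, $ay=\det\bigl((a,0),(x,y)\bigr)=\det(\vv_2,\gamma_0\vv_2)$, which equals the lower-left entry of $\gamma_0$; this entry is nonzero (else $\vv'$ would be horizontal) and for $H_5$ one has $|\gamma_{21}|\ge1$ whenever $\gamma_{21}\ne0$ (equivalently, the horoball $\{\Imag z>1\}$ at the cusp is precisely invariant). Hence $ay\ge1$, so $y\ge ay\ge1$ as $a\le1$, and the return time $y/x\ge y\ge1$. In particular returns are discrete, so $\OmX$ is transverse to the flow; together with ergodicity of $h_s$ on the finite-volume space $X$ and $\mu(\OmX)>0$, almost every orbit enters $\OmX$ and (by recurrence) returns, so $T$ is a well-defined a.e.\ bijection, its inverse being supplied by backward recurrence. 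Finally, pulling back Haar measure along the chart $(a,b,s)\mapsto h_sg_{a,b}\Gamma$: writing a matrix of $\SL(2,\R)$ by its first column and upper-right entry, Haar measure is $|x|^{-1}\,dx\,dy\,dz$, and the substitution $(x,y,z)=(a,b,-sa)$ turns this into $da\,db\,ds$, so the transverse measure on $\Omega=\{s=0\}$ is Lebesgue measure $da\,db$, which $T$ preserves.

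The last step, the explicit form of $T$ and $R$, is where the real work lies. Given $(a,b)\in\Omega$, the first-return vector $\vv'=g_{a,b}\gamma_0\vv_2$ is the element of $g_{a,b}\La$ of smallest positive slope among those with real part in $(0,1]$; writing $\gamma_0=\mat{p}{r}{q}{t}$ (so $pt-qr=1$) one reads off $a'=\Real(\vv')=ap+bq$, $R(a,b)=\slope(\vv')=\dfrac{q}{a(ap+bq)}$, and, after right-multiplying $h_{R(a,b)}g_{a,b}$ by $\gamma_0$ to make it upper triangular and then reducing modulo $\mat{1}{\phi}{0}{1}$, $b'=(ar+bt)-n\phi(ap+bq)$ with $n\in\mathbb{Z}$ the unique integer placing $b'$ in $(1-a'\phi,1]$. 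Thus on the region of $\Omega$ where a given $\gamma_0$ wins, $R$ is a fixed rational function determined only by $(p,q)$, while $T$ is linear there but is further subdivided according to the integer $n$---this explains why $R$ has finitely many pieces and $T$ countably many. What remains, and is the heart of the argument, is to show that exactly three vectors $\gamma_0\vv_2$ ever realize the first return, i.e.\ that these three ``types'' partition $\Omega$ into three explicit regions carrying three distinct rational formulas for $R$. I expect this finite-state analysis of first returns---in effect a Veech-group continued-fraction algorithm for $H_5$, read off from the generators $\mat{0}{-1}{1}{0}$, $\mat{1}{\phi}{0}{1}$ and the cylinder geometry of the golden L---to contain essentially all the difficulty; the remaining bookkeeping follows the Boca-Cobeli-Zaharescu / Athreya-Cheung template.
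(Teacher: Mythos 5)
Your framework is sound, and in several places you give complete arguments by routes genuinely different from the paper's. For the coordinate bijection you compute the full upper-triangular subgroup $\{\pm\mat{1}{n\phi}{0}{1}\}$ of $\Gamma$ to get injectivity directly, whereas the paper runs through the standard equivalences ($u_s$-periodicity, horizontal saddle connection, upper-triangular form) and normalizes $b$ by the parabolic; both work, and yours has the side benefit of showing that $g\La$ can contain at most one positive horizontal vector. For the lower bound $R\ge 1$ you use $\SL(2,\R)$-invariance of the determinant plus the classical fact that nonzero lower-left entries of $H_5$ have absolute value at least $1$; the paper instead gets $R\ge 1$ for free from its explicit three formulas for $R$. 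Your argument is clean and correct (the fact you invoke is equivalent to $\{|z|>1,\ |\Real z|<\phi/2\}$ being a fundamental domain for $H_5$, or to the observation that the two horizontal cylinders of the golden L have heights $1$ and $\phibar$, so no saddle connection in $\La=\Gamma\vv_2=\phi\,\Gamma\vv_1$ has vertical part in $(0,1)$), but it deserves at least that one-line justification since $\mathbb{Z}[\phi]$ does contain elements of absolute value less than $1$. The Haar-measure computation showing the transverse measure is $da\,db$ is the same as the paper's, just made explicit.

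The genuine gap is the last step, which you explicitly defer: identifying the finitely many (in fact three) group elements $\gamma_0$ whose images $\gamma_0\vv_2$ can realize the first return, and the partition of $\Om$ into the regions where each wins. This is not bookkeeping; it is exactly the content of the assertions ``piecewise rational with three pieces'' and ``piecewise linear with countably many pieces,'' and nothing in your setup even guarantees a priori that only finitely many $\gamma_0$ occur (your sentence ``this explains why $R$ has finitely many pieces'' assumes the conclusion). The paper resolves this by exhibiting the answer concretely: the winning vectors are $w_1=(\phi,\phi)$, $w_\phi=(1,\phi)$, $w_\infty=(0,1)$, winning respectively on the explicit triangles $\Om_1$, $\Om_\phi$, $\Om_\infty$ cut out by the lines $b=\phibar-a$ and $b=\phibar-a\phibar$ (the verification that $g_{a,b}w_j$ has the smallest slope in $g_{a,b}\La\cap U_1$ on $\Om_j$ is then a finite check against the other short vectors of $\La$, which the paper leaves to the reader). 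Your general formulas $a'=ap+bq$, $R=q/(a(ap+bq))$, $b'=(ar+bt)-n\phi(ap+bq)$ do specialize correctly to the paper's three cases once $(p,q)$ is known, so the missing piece is precisely this determination of the winners and their domains; without it the theorem's quantitative claims are not established.
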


\subsection{Connection to slope gaps}
\label{sec:connect}

The connection between Theorem~\ref{theorem:cross} and slope gap distributions
can be seen as follows.  For $g \in \SL(2, \R)$, consider the set of positive
slopes
$\s^g_1 = \{ s_1 < s_2 < \ldots < s_N < \ldots\}$ of elements of $g\La \cap 
U_1$, where  $U_1$ is the vertical strip
$$
U_1 = (0,1] \times (0,+\infty) =  \{ z \in \C: \Imag(z) >0, \Real(z) \in (0,1]\}.
$$
These slopes are the positive times $s$ when the orbit
$\{h_s g \Gamma\}_{s>0}$ intersects $\OmX$.  That is,
$$h_{s_i} g \Gamma \in \OmX,\quad i = 1, 2, \ldots$$
Let $(a, b) \in \Om$ be such that $h_{s_1} g \Gamma = g_{a, b} \Gamma$.
Then we have, for $i \in \mathbb{N}$, $R(T^i(a, b)) = s_{i+2} - s_{i+1}$.
That is, the set of gaps $\G^N_g = \{s_{i+1} - s_i: i=1, 2, 3, \ldots N\}$
is given by the roof function $R$ evaluated along the orbit of
the return map $T$ up till time $N-1$.  For $t >0$, the proportion of
gaps of size at most $t$ can be expressed as a \emph{Birkhoff sum} of the
indicator function $\chi_t$ of the set $R^{-1} ( [t, \infty) ) \subset \Om$,
via
$$
\frac{1}{N} |\G^N_g \cap [t, \infty)| = \frac{1}{N} \sum_{i=0}^{N-1}
\chi_t (T^i (a, b)).
$$
Thus, the dynamics and ergodic theory of the return
map (and in particular the distribution of the roof function along orbits) are
crucial to understanding the gap distributions of slopes.

\subsection{Proof of Theorem~\ref{theorem:cross}}

Let $a_t = g_{e^{t/2}, 0}$ and $u_s = g_{1, s}$.  The action of $a_t$ is
known as the \emph{geodesic} flow and the action of $u_s$ is the (opposite)
horocycle flow.  For $x = g\Gamma \in X$, let $\ell(x) = \min\{ | g \vv|: \vv
\in \La\}$ denote the length of the shortest nonzero vector in $g\La$.  For
any compact subset $K \subset X$, there is an $\epsilon >0$ such that for any $x
\in K$, $\ell(x) > \epsilon$.  Given $x = g\Gamma \in X$, the following are
equivalent (see, e.g.,~\cite{HSsurvey}):
\begin{itemize}
\item[\tb] \textbf{$u_s$-periodicity}: there exists $s_0 \neq 0$ so that $u_{s_0} x = x$.
\item[\tb] \textbf{$a_{-t}$-divergence}:
as $t \to +\infty$, $a_{-t} x \to \infty$ in $X$.  In
particular, $\ell(a_{-t} x) \to 0$.
\item[\tb] $x$ has a \textbf{horizontal saddle connection}, that is,
there exists $\vv \in g \La \cap \R$.
\item[\tb] \textbf{upper triangular form}:
we can write $x = g_{a, b} \Gamma$, where $a = \min \{ |\vv|: \vv \in g\La
\cap \R\}$ is the length of the shortest horizontal vector in $g\La$.
\end{itemize}
Similarly, $h_s$-periodicity is equivalent to
$a_t$-divergence and having a vertical saddle connection.

\medskip

Recall that $\OmX$ is the set of surfaces in the $\SL(2, \R)$-orbit of the
golden L with a length $\le 1$ horizontal long saddle connection vector.  By
the above, these can be expressed as $g_{a, b'} \Gamma \in X$, with $0 < a \le
1$, and $b' \in \R$.  Since the parabolic element $u_{-\phi}$ is in $\Gamma$,
we can apply it $k$ times on the right to $g_{a, b'}$ to obtain $g_{a, b' -
k\phi a}$, where $ b= b'-k\phi a \in (1-\phi a, 1]$.  This condition
determines $b$ uniquely (and given any starting $b'$, it determines $k$).
Conversely, any surface of the form $g_{a, b} \om$ with $(a, b) \in \Om$ has
the saddle connection vector $a \in \C$, and thus clearly has a horizontal
vector of length at most $1$.  Thus, we have shown that $\OmX$ and $\Om$
are in bijection.

%
\begin{figure}[ht]

\begin{minipage}{0.4\textwidth}
\centering
\begin{tikzpicture}[scale=2.5]
  \draw  [gray] (0,0)
-- (1,0)
-- (1.618,0)
-- (1.618,1)
-- (1,1)
-- (1,1.618)
-- (0,1.618)
-- (0,1)
-- (0,0)
-- cycle;
  \draw [->,thick,>=stealth] (0,0) node (O) [below left] {$O$} -- (1,1.618) node (wphi) [above right] {$w_\phi$};
  \draw [->,thick,>=stealth] (0,0) -- (1.618,1.618) node (w1) [above right] {$w_1$};
  \draw [->,thick,>=stealth] (0,0) -- (1.618,1) node (wphibar) [above right] {$w_\phibar$};
  \draw [->,thick,>=stealth] (0,0) -- (1,0) node (w0) [below right] {$w_0$};
  \draw [->,thick,>=stealth] (0,0) -- (0,1) node (woo) [above left] {$w_\infty$};
\end{tikzpicture}
\end{minipage}
\begin{minipage}{0.4\textwidth}
\centering
\begin{tikzpicture}[scale=3] 
\tikzstyle{every circle node}=[fill,inner sep=1pt]
\draw (0,0) node [left] {\scriptsize $\vect{0}{0}$}
-- (1,0) node [right] {\scriptsize $\vect{1}{0}$}
-- (1,1) node [right] {\scriptsize $\vect{1}{1}$}
-- (0,1) node [left] {\scriptsize $\vect{0}{1}$}
-- (0,0)
-- cycle;
\draw [thick,fill=gray!50,fill opacity=0.7]
  (1,1) -- (1,-0.618)
  -- (0,1) -- (1,1) -- cycle;
\draw (0.618,0) node [circle] {}
  node [below left] {\scriptsize $\vect{\phibar}{0}$}
  -- (1,-0.382) node [circle] {}
  node [right] {\scriptsize $\vect{1}{-\phibar^2}$};
\draw (0.382,0.382) node [circle] {}
  node [left] {\scriptsize $\vect{\phibar^2}{\phibar^2}$}
-- (1,0);
\node [circle] at (0,1) {};
\node [circle] at (1,1) {};
\node [circle] at (1,0) {};
\node [circle] at (1,-0.618) {};
\node [right] at (1,-0.618) {\scriptsize $\vect{1}{-\phibar}$};
\node at (0.66,0.66) {\footnotesize $\Om_\infty$};
\node at (0.818,-0.03) {\footnotesize $\Om_\phi$};
\node at (0.93,-0.37) {\footnotesize $\Om_1$};
\end{tikzpicture}
\end{minipage}

\caption{%
Left:
Vectors $w_j$ in $\La$ (the orbit $\Gamma \vv_2$) of slopes $j = 0$, $\phibar$, $1$, 
$\phi$, $\infty$.\newline
Right:
The partition of $\Om$ into $\Om_1$, $\Om_\phi$, $\Om_\infty$.
}
\label{fig:sc:zones}
\end{figure}

To calculate the return time and the return map, we need to
understand the vector in $g_{a, b} \La$ of smallest positive slope in the
vertical strip $U_1$.  In Figure~\ref{fig:sc:zones}, we show a selection of 
vectors in $\La$.  Particularly relevant to our discussion are the
vectors $w_1 = (\phi,\phi)$, $w_\phi = (1,\phi)$, $w_\infty = (0,1)$.

Consider the partition of $\Om$ into the
three subdomains $\Om_1$, $\Om_\phi$, $\Om_\infty$ 
defined by:
\begin{align*}
\Om_1 & = \bigl\{\, (a,b) \in \Om :\quad
\phibar \le a \le 1,\quad 1- a \phi < b \le \phibar-a \,\bigr\}\\
\Om_\phi & = \bigl\{\, (a,b) \in \Om :\quad
\phibar^2 \le a \le 1,\quad \phibar-a < b \le \phibar-a\phibar \,\bigr\}\\
\Om_\infty & = \bigl\{\, (a,b) \in \Om :\quad
0 < a \le 1,\quad \phibar-a\phibar < b \le 1 \,\bigr\}
\end{align*}
These subdomains are illustrated in Figure~\ref{fig:sc:zones}.

A direct calculation, left as an exercise to the reader, shows that
for each $j$ in $\{ 1, \phi, \infty \}$, for any $(a, b)$ in $\Om_j$,
the vector $g_{a, b} w_j$ is the one with smallest slope in
$g_{a, b} \La \cap U_1$.


%
%
%

In each zone, the return time is given by the slope of the corresponding
vectors.  Thus,

\psp
\tbx if $(a,b)\in\Om_1$,\quad
$g_{a, b} w_1 = \phi( (a+b) + a^{-1} i)$,\quad and\quad
$R(a, b) = \frac{a^{-1}}{a+b} \frac{1}{a(a+b)}$;

\psp
\tbx if $(a,b)\in\Om_\phi$,\quad
$g_{a, b} w_\phi = (a+ b\phi) + a^{-1}\phi i $,\quad and \quad
$R(a, b) = \frac{a^{-1}\phi}{a+b\phi} = \frac{1}{a(a\phibar + b)}$;

\psp
\tbx if $(a,b)\in\Om_1$,\quad
$g_{a,b} w_\infty = b + a^{-1}i$,\quad and \quad
$R(a, b) = \frac{a^{-1}}{b} = \frac{1}{ab}$.

To compute the return map, Note that $h_s g_{a,b} = \mat{a}{b}{-s a}{-s b + a^{-1}}$ is 
then respectively 
$\mat{a}{b}{\frac{-1}{a+b}}{\frac{1}{a+b}}$,
$\mat{a}{b}{\frac{-1}{a\phibar+b}}{\frac{\phibar}{a\phibar+b}}$,
$\mat{a}{b}{\frac{-1}{b}}{0}$.

\medskip
Computing the canonical representative $g_{T(a,b)}$
in the class $h_s g_{a,b} \Gamma$ yields:

\pmp
\tbx in $\Om_1$:
$\displaystyle
\mat{a}{b}{\frac{-1}{a+b}}{\frac{1}{a+b}}
\mat{\phi}{1}{\phi}{\phi}
u_\phi^{k_1}
= \mat{(a+b)\phi}{a+b\phi+k_1\phi^2(a+b)}{0}{\frac{\phibar}{a+b}}$,
$k_1 = -\left\lfloor  \frac{a+b\phi-1}{\phi^2(a+b)} 
\right\rfloor$,

\pmp
\tbx in $\Om_\phi$:
$\displaystyle
\mat{a}{b}{\frac{-1}{a\phibar+b}}{\frac{\phibar}{a\phibar+b}}
\mat{1}{0}{\phi}{1}
u_\phi^{k_phi}
= \mat{a+b\phi}{b+k_\phi\phi(a+b\phi)}{0}{\frac{\phibar}{a\phibar+b}}$,
$k_\phi = -\left\lfloor  \frac{b-1}{\phi(a+b\phi)} 
\right\rfloor$,

\pmp
\tbx in $\Om_\infty$:
$\displaystyle\mat{a}{b}{\frac{-1}{b}}{0}
\mat{0}{-1}{1}{0}
u_\phi^{k_\infty}
= \mat{b}{-a+k_\infty\phi b}{0}{\frac{1}{b}}$,
$k_\infty = -\left\lfloor  \frac{-a-1}{\phi b} \right\rfloor$.

\pmp
Thus, we have the following formulas for $T(a, b)$:

\tbx in $\Om_1$:
$T(a, b) = \left((a+b)\phi, a+b\phi+k_1\phi^2(a+b)\right)$,
$k_1 = -\left\lfloor  \frac{a+b\phi-1}{\phi^2(a+b)} 
\right\rfloor$,

\tbx in $\Om_\phi$:
$T(a, b) = \left( a+b\phi, b+k_\phi\phi(a+b\phi)\right)$,
$k_\phi = -\left\lfloor  \frac{b-1}{\phi(a+b\phi)} 
\right\rfloor$, 

\tbx in $\Om_\infty$:
$T(a, b) = (b, -a+k_\infty\phi b)$,
$k_\infty = -\left\lfloor  \frac{-a-1}{\phi b} \right\rfloor$.

\medskip
\noindent This completes the proof of Theorem~\ref{theorem:cross}.\qed


\section{Ergodicity and equidistribution}
\label{subsec:ergodic} 

\subsection{Ergodic theory for $T$}
The construction of the map $T$ as a first return map for horocycle flow on $X
= \SL(2, \R)/\Gamma$ allows us to classify the ergodic invariant measures for
$T$ and that long periodic orbits of $T$ equidistribute, as consequences of
the corresponding results for $h_s$ acting on $\SL(2, \R)/\Gamma$, due to
Dani-Smillie~\cite{DS}.

\begin{Theorem}
\label{theorem:ergodic}
The Lebesgue probability measure $m$ given by $dm = \frac{2}{\phi} da db$ is
the unique ergodic invariant probability measure for $T$ not supported on a
periodic orbit.  In particular it is the unique ergodic absolutely continuous
invariant measure (acim).  For every $(a, b)$ not periodic under $T$ and any
function $f \in L^1(\Om, dm)$, we have that $$\lim_{N \rightarrow \infty}
\frac{1}{N} \sum_{i=0}^{N-1} f(T^i (a, b)) = \int_{\Om} f dm$$ Moreover, if
$\{(a_r, b_r)\}_{r=1}^{\infty}$ is a sequence of periodic points with periods
$N(r) \rightarrow \infty$ as $r \rightarrow \infty$, we have, for any bounded
function $f$ on $\Om$, $$\lim_{r \rightarrow \infty} \frac{1}{N(r)}
\sum_{i=0}^{N(r)-1} f(T^i (a_r, b_r)) = \int_{\Om} f dm.$$
\end{Theorem}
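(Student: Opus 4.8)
\medskip
\noindent\emph{Plan of proof of Theorem~\ref{theorem:ergodic}.}
The plan is to deduce all of the assertions from the corresponding facts about the horocycle flow $h_s$ on $X=\SL(2,\R)/\Gamma$ --- Dani--Smillie's classification of ergodic invariant measures, their theorem that \emph{every} non-periodic $h_s$-orbit equidistributes with respect to Haar measure, and their theorem that closed horocycles of length tending to infinity equidistribute~\cite{DS} --- using that $T$ is the Poincar\'e return map of $h_s$ to $\OmX$ with return time $R$ (Theorem~\ref{theorem:cross}). Since $\Gamma=\triangle(2,5,\infty)$ is a non-cocompact lattice, Dani--Smillie applies.

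The first step is to identify $m$ with the transverse measure of Haar. In the coordinates $(a,b,s)\mapsto h_s g_{a,b}\Gamma$ one has $h_s g_{a,b}=\mat{a}{b}{-sa}{-sb+a^{-1}}$, and, using that $\frac{dx\,dy\,dz}{x}$ (in the entries $\mat{x}{y}{z}{w}$ with $w=(1+yz)/x$) is a Haar measure on $\SL(2,\R)$, the substitution $x=a$, $y=b$, $z=-sa$ shows that Haar measure pulls back to a constant multiple of $da\,db\,ds$. Hence the suspension of $(\Om,T)$ with roof $R$ is $(X,h_s)$ with Haar measure, the transverse measure it induces on $\OmX$ is proportional to Lebesgue measure $da\,db$, and --- since $\int_\Om da\,db=\phi/2$ --- its probability normalization is $m=\frac{2}{\phi}\,da\,db$. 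In particular $m$ is $T$-invariant (as already recorded in Theorem~\ref{theorem:cross}), $\int_\Om R\,dm=2c/\phi<\infty$ with $c=3\pi/10$, and, because $R\ge 1$, a point $(a,b)$ is $T$-periodic if and only if $g_{a,b}\Gamma$ lies on a closed horocycle, which then meets $\OmX$ in only finitely many points.

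The technical heart is a transfer lemma: if the $h_s$-orbit of $g_{a,b}\Gamma$ equidistributes with respect to Haar then the $T$-orbit of $(a,b)$ equidistributes with respect to $m$, and likewise for a sequence of closed orbits whose periods tend to infinity. I would prove this by the standard flow-box and renewal argument. Given a bounded continuous $f$ on $\Om$ and $\epsilon<1$, let $\phi_\epsilon$ on $X$ be supported on the flow-box $\{h_s g_q\Gamma:q\in\Om,\ 0\le s<\epsilon\}$ --- which is embedded because $R\ge 1$ --- with value $\epsilon^{-1}f(q)$ there. Then $\int_X\phi_\epsilon\,d(\mathrm{Haar})$ is the integral of $f$ against the transverse measure, while $\int_0^{t_N}\phi_\epsilon(h_s g_{a,b}\Gamma)\,ds=\sum_{i=0}^{N-1}f(T^i(a,b))+O(\|f\|_\infty)$, where $t_N=\sum_{i=0}^{N-1}R(T^i(a,b))\to\infty$. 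Dividing by $t_N$, passing to the limit, and taking the ratio with the case $f\equiv 1$ to cancel the total transverse mass gives $\frac1N\sum_{i=0}^{N-1}f(T^i(a,b))\to\int_\Om f\,dm$. The one delicate point, and the main obstacle, is that the flow-box over $\Om$ is not precompact (it exits the cusp of $X$) and $\phi_\epsilon$ is continuous only off a Haar-null set, so one must invoke the no-escape-of-mass content of the Dani--Smillie convergence together with a sandwiching of $\phi_\epsilon$ between functions in $C_c(X)$; this is routine but must be done carefully. The version for a closed orbit is identical, with period $P_r=\sum_{i=0}^{N(r)-1}R(T^i(a_r,b_r))\ge N(r)\to\infty$ in place of $t_N$.

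Granting the transfer lemma, the assertions follow quickly. Since every non-periodic $h_s$-orbit equidistributes to Haar, the transfer lemma gives $\frac1N\sum_{i<N}f(T^i(a,b))\to\int_\Om f\,dm$ for every non-$T$-periodic $(a,b)$ and every bounded continuous $f$; the extension to $f\in L^1(\Om,dm)$ for $m$-almost every $(a,b)$ is then Birkhoff's theorem for $(\Om,T,m)$, whose ergodicity follows from that of $(X,h_s,\mathrm{Haar})$ (classical, and contained in~\cite{DS}) together with the fact that the first-return map of an ergodic flow to a cross-section is ergodic. For uniqueness, let $\nu$ be an ergodic $T$-invariant probability measure not supported on a periodic orbit; the $T$-periodic points form a countable union of finite orbits, so ergodicity forces $\nu$ to give them measure zero, hence $\nu$-almost every point is non-$T$-periodic. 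By the previous paragraph the averages $\frac1N\sum_{i<N}f(T^i(a,b))$ then converge to $\int_\Om f\,dm$ for $\nu$-almost every $(a,b)$ and all bounded continuous $f$, while Birkhoff's theorem for $(\Om,T,\nu)$ forces convergence to $\int_\Om f\,d\nu$; therefore $\nu=m$. Any ergodic absolutely continuous invariant measure is not supported on a periodic orbit, since a finite set is Lebesgue-null, so $m$ is the unique ergodic acim. Finally, the equidistribution of periodic orbits with $N(r)\to\infty$ is the closed-orbit case of the transfer lemma combined with the Dani--Smillie equidistribution of long closed horocycles~\cite{DS}, with the passage from bounded continuous to bounded (almost-everywhere continuous) test functions handled by the same sandwiching as above.
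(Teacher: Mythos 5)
Your proposal is correct and follows essentially the same route as the paper: realize $(X,h_s,\mathrm{Haar})$ as the suspension of $(\Om,T,m)$ with roof $R$, then transfer Dani--Smillie equidistribution of non-periodic horocycle orbits and the equidistribution of long closed horocycles down to the return map; you simply make explicit the flow-box transfer, the uniqueness argument, and the cusp/escape-of-mass caveat that the paper subsumes under ``standard theory of first return maps.'' One small correction: the equidistribution of long closed horocycles on a non-cocompact quotient is due to Sarnak~\cite{Sarnak}, which is what the paper cites, rather than to Dani--Smillie.
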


\begin{proof}
By standard theory of first return maps, if $(a,b)$ is not periodic with
respect to $T$, then $g_{a, b} \Gamma$ is not $h_s$-periodic.  By the results
of Dani-Smillie~\cite{DS}, non-periodic orbits of $h_s$ equidistribute (i.e.,
are Birkohff regular) with respect to Haar measure $\mu$ on $\SL(2,
\R)/\Gamma$, which can be described as (a multiple of) $da db ds$ when we
realize $\SL(2, \R)/\Gamma$ as a suspension space over $\Om$ (see also
Appendix~\ref{appendix:volume} for detailed volume computations).  Thus, the
corresponding non-periodic orbit of $T$ on $\Om$ must equidistribute with
respect to Lebesgue measure on $\Om$.  Similarly, by results of Sarnak~\cite{Sarnak}, long periodic horocycles on
$\SL(2, \R)/\Gamma$ equidistribute, and thus, long periodic orbits of $T$ on
$\Om$ must equidistribute. Since the roof function $R$ is bounded below by
$1$, the length of the discrete period $N(r) \rightarrow \infty$ implies that
the length of the continuous period of $g_{a_r, b_r} \Gamma$ must also go to
infinity (in fact, it is at least $N(r)$).
\end{proof}

\subsection{Proof of main theorems}
\label{subsec:mainproof}

Before proving Theorem~\ref{theorem:main},
we state and prove a more general result:

\begin{Theorem}
\label{theorem:general}
Let $x = g\Gamma \in X$ be such that $x$ is not $h_s$-periodic.  Let
$$
\s_g = \{ 0 \le s_1 < s_2 < \ldots < s_N < \ldots\}$$ be the set of slopes of
elements of $g\La$ in the vertical strip $U_1$. Let
$$\G^N_x = \{s_{i+1} - s_i: i =1, 2, \ldots, N\}$$
denote the associated gap set.  Then for any $t \geq 0$,
$$
\lim_{N
\rightarrow \infty} \frac{1}{N}
\Bigl\lvert\G^N_x \cap [t, \infty)\Bigr\rvert
= m(\bigl\{\,(a, b) \in \Om: R(a, b) \geq t \,\bigr\}).
$$
If $x = g\Gamma$ is $h_s$-periodic, define $x_r = a_{-r}
g\Gamma=a_{-r}x$.  Then there is a $P(r)$ so that for any $N \geq P(r)$,
$\G^N_{x_r} = \G^{P(r)}_{x_r}.$ We then have
$$
\lim_{r \rightarrow \infty} \frac{1}{P(r)}
\Bigl\lvert\G^{P(r)}_{x_r} \cap [t, \infty)\Bigr\rvert
= m( \bigl\{\,(a, b) \in \Om: R(a, b) \geq t\,\bigr\}).
$$
\end{Theorem}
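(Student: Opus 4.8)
The plan is to reduce both assertions to the ergodic theory of the golden-L BCZ map $T$ proved in Theorem~\ref{theorem:ergodic}, via the dictionary of Section~\ref{sec:connect}. Recall from there that $\chi_t$ denotes the indicator of the set $R^{-1}([t,\infty))=\{(a,b)\in\Om:R(a,b)\ge t\}$, and that if $h_{s_1}(g\Gamma)=g_{a,b}\Gamma$ is the first crossing of $\OmX$ by the $h_s$-orbit of $x$, then (gaps counted with multiplicity)
$$
\frac1N\bigl\lvert\G^N_x\cap[t,\infty)\bigr\rvert=\frac1N\sum_{j=0}^{N-1}\chi_t\bigl(T^j(a,b)\bigr).
$$
Thus both limits in the statement are limits of Birkhoff averages of the single bounded function $\chi_t$, and the work is to invoke the right equidistribution statement in each case.

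For the first formula, when $x=g\Gamma$ is not $h_s$-periodic, the point $(a,b)$ is not $T$-periodic (standard first-return theory; cf.\ the proof of Theorem~\ref{theorem:ergodic}). Since $\chi_t$ is bounded and $m$ is a probability measure, $\chi_t\in L^1(\Om,m)$, so the Birkhoff statement of Theorem~\ref{theorem:ergodic} applies and yields
$$
\lim_{N\to\infty}\frac1N\sum_{j=0}^{N-1}\chi_t\bigl(T^j(a,b)\bigr)=\int_\Om\chi_t\,dm=m\bigl(\{(a,b)\in\Om:R(a,b)\ge t\}\bigr).
$$
(If one prefers to quote only the equidistribution of non-periodic horocycles against compactly supported continuous functions, observe that $\partial\{R\ge t\}$ lies in a finite union of real-analytic arcs — level sets of the three rational pieces of $R$, together with the boundary curves of $\Om_1$, $\Om_\phi$, $\Om_\infty$ — hence is $m$-null, so $\chi_t$ may be squeezed between continuous functions of nearly equal $m$-integral; but since Theorem~\ref{theorem:ergodic} is stated for all of $L^1(\Om,m)$ this refinement is unnecessary.)

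For the second formula, when $x$ is $h_s$-periodic with minimal period $L$, I would first note that the identity $a_{-r}h_sa_r=h_{se^r}$ gives $h_{Le^r}x_r=h_{Le^r}a_{-r}x=a_{-r}h_Lx=a_{-r}x=x_r$, so $x_r=a_{-r}x$ is $h_s$-periodic with minimal continuous period $Le^r\to\infty$. The set of positive crossing times of the closed horocycle through $x_r$ is invariant under translation by $Le^r$; hence, denoting by $P(r)$ the number of crossings in one period, the slope sequence of $x_r\La\cap U_1$ satisfies $s_{i+P(r)}=s_i+Le^r$ for all $i$, the gap values are $P(r)$-periodic, and therefore $\G^N_{x_r}=\G^{P(r)}_{x_r}$ for every $N\ge P(r)$. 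Writing $(a_r,b_r)$ for the coordinates of the first crossing, we then have
$$
\frac1{P(r)}\bigl\lvert\G^{P(r)}_{x_r}\cap[t,\infty)\bigr\rvert=\frac1{P(r)}\sum_{j=0}^{P(r)-1}\chi_t\bigl(T^j(a_r,b_r)\bigr),
$$
the average of $\chi_t$ over the periodic $T$-orbit through $(a_r,b_r)$. Once $P(r)\to\infty$ is established, the equidistribution of long periodic $T$-orbits in Theorem~\ref{theorem:ergodic} (applied to the bounded function $\chi_t$) delivers the limit $m(\{R\ge t\})$, completing the proof.

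The conceptual input is entirely carried by Theorem~\ref{theorem:ergodic}, which itself packages the equidistribution of non-periodic horocycle orbits (Dani--Smillie) and of long closed horocycles (Sarnak); granting that, everything above is bookkeeping. The one step that still needs a genuine, if short, argument — and which I expect to be the main obstacle — is the divergence $P(r)\to\infty$: the bound $R\ge1$ only gives $P(r)\le Le^r$, and the reverse requires ruling out closed horocycles of diverging length that meet $\OmX$ only boundedly often. A clean way to do this is to apply equidistribution of the closed horocycles $h_sx_r$ to a continuous function on $X$, viewed as the suspension of $(\Om,T)$ under $R$, supported in the first unit of each $R$-fibre (unambiguous because $R\ge1$) and recording whether the preceding return lay in $\{R\ge t\}$; comparing the resulting flow average with $\int_\Om R\,dm<\infty$ (a constant multiple of the Haar volume of $X$) both forces $P(r)\to\infty$ and re-derives the limit. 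The remaining points — matching indices in the dictionary, $L^1$-membership of $\chi_t$, and the piecewise-analytic structure of $\{R\ge t\}$ — are routine.
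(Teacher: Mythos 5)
Your proof is correct and follows essentially the same route as the paper: both reduce the gap-counting to a Birkhoff average of the indicator of $\{R \ge t\}$ along the $T$-orbit of the first crossing point and then invoke Theorem~\ref{theorem:ergodic}, using the conjugation relation between $a_{-r}$ and $h_s$ to handle the periodic case. The only place you go beyond the paper is your explicit worry that $P(r)\to\infty$ (which is needed to apply the periodic-orbit half of Theorem~\ref{theorem:ergodic}); the paper leaves this implicit, and your proposed way of supplying it is sound.
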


\begin{proof}
As observed in \S\ref{sec:connect}, the proportion $ \frac{1}{N} |\G^N_x \cap
[t, \infty)|$ of the first $N$ slope gaps of size at most $t$ in the strip
$U_1$ is a Birkhoff sum of the indicator function of the super-level set
$\{(a, b) \in \Om: R(a, b) \geq t\}$.  The first statement then follows from
the first statement of Theorem~\ref{theorem:ergodic}.  For the second
statement, note that if $x$ is periodic under $h_s$ with period $s_0$,
$a_{-r}x$ is periodic with period $e^r s_0$, by the conjugation relation $a_r
h_s a_{-r} = h_{se^{-r}}$.  $P(r)$ is the corresponding period for the map
$T$.  The second statement of the theorem then follows from the second
statement of Theorem~\ref{theorem:ergodic}.
\end{proof}

\subsection{Proof of Theorem~\ref{theorem:main}} 
\begin{lemma}
Let $x$ be the golden L and $x_R=a_{-2\log R}x$. We have $$\frac{\left| \G_R(\La) \cap [t,
\infty)\right|}{N(R)} = \frac{1}{N(R)} |\G^{N(R)}_{x_R} \cap [t, \infty)|.$$
\end{lemma}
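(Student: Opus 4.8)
The plan is to recognize $a_{-2\log R}$ as a diagonal rescaling and to track what it does to slopes and to horizontal extents. Since $a_t = g_{e^{t/2},0}$, one has $a_{-2\log R}=g_{1/R,0}=\mat{1/R}{0}{0}{R}$, so for a holonomy vector $z=\Real(z)+i\,\Imag(z)$ this gives $\Real(a_{-2\log R}z)=\Real(z)/R$ and $\Imag(a_{-2\log R}z)=R\,\Imag(z)$, hence $\slope(a_{-2\log R}z)=R^2\slope(z)$. Thus $z\mapsto a_{-2\log R}z$ is a bijection from $\{z\in\La:\Real(z)\in(0,R],\ \Imag(z)>0\}$ onto $x_R\La\cap U_1$ that multiplies every slope by $R^2$ and is order-preserving on slopes. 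This is the basic dictionary I would set up first.

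Next I would match the two truncations. For $z$ in the source set above, $\slope(z)\le1$ holds if and only if $\slope(a_{-2\log R}z)=R^2\slope(z)\le R^2$; the slopes with $\slope(z)\le1$ are exactly the elements of $\s_R$, so the slopes of $x_R\La\cap U_1$ that are $\le R^2$ are exactly $\{R^2 s:s\in\s_R\}$, a set of cardinality $N(R)$. (Here I would use that $\La=\Gamma\vv_2$ and $\La_\om=\La\cup\phibar\La$ have the same set of slopes, since multiplying a vector by the nonzero scalar $\phibar$ leaves its slope unchanged.) Since in the increasing list $\s_{x_R}=\{0\le s_1<s_2<\cdots\}$ every slope $\le R^2$ precedes every slope $>R^2$, the first $N(R)$ terms of $\s_{x_R}$ are precisely $s_i=R^2 s_R^{(i-1)}$, with $s_1=0$ coming from the horizontal vector $\vv_2$ and the topmost of them equal to $R^2$.

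Then the conclusion would follow: consecutive differences transport as $s_{i+1}-s_i=R^2\bigl(s_R^{(i)}-s_R^{(i-1)}\bigr)$, which are exactly the (already $R^2$-scaled) elements of $\G_R(\La)$, so $\G^{N(R)}_{x_R}$ and $\G_R(\La)$ agree as multisets; counting elements in $[t,\infty)$ on both sides and dividing by $N(R)$ then gives the stated identity. The main obstacle, in what is otherwise a routine computation, will be the endpoint bookkeeping: verifying that the number of slopes of $\La$ which are $\le1$ with real part $\le R$ is exactly $N(R)$, and lining up the index conventions — whether $N(R)$ counts slopes or gaps, and whether the difference just above the rescaled slope $1$ belongs to $\G^{N(R)}_{x_R}$ — so that the identity is exact rather than merely exact up to one term; this is elementary, and in any event harmless for the limit $R\to\infty$ used afterward in the proof of Theorem~\ref{theorem:main}.
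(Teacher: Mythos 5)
Your proposal follows essentially the same route as the paper's proof: renormalize by $a_{-2\log R}=\mat{1/R}{0}{0}{R}$, observe that this scales slopes (hence slope differences) by $R^2$ while carrying vectors of horizontal component at most $R$ into the strip $U_1$, and match the first $N(R)$ terms of the two slope lists; your explicit attention to the endpoint/index bookkeeping (and to $\La$ versus $\La_\om$) is more careful than the paper's own two-line argument. The only ingredient the paper records that you omit is the $h_s$-periodicity of the golden L and the resulting $T$-periodicity of $x_R$ with period $N(R)$, which is not needed for the counting identity itself but is what licenses the subsequent application of the periodic case of Theorem~\ref{theorem:general}.
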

\begin{proof}
The golden L is $h_s$-periodic, with period $\phi$, since the matrix
$\mat{1}{0}{-\phi}{1} \in \Gamma$.  We are interested in the slopes of saddle
connections with horizontal component at most $R$, and slope at most $1$.  The
map $T$ does not see any of the saddle connections slopes for vectors of
length more than $1$.  However, renormalizing by the matrix $a_{-2\log R} =
\mat{1/R}{0}{0}{R}$, we can consider the point $x_R = a_{-2\log R}\Gamma \in
X$.  Note that this matrix scales slopes and thus differences of slopes by
$R^2$.  Each $\gamma \in \s_R$ corresponds with a saddle connection $a_{-2\log
R}\gamma$ on $x_R$ which is in $\Om$.  The point $x_R$ has period $N(R)$
under $T$.
\end{proof}

\begin{proof}[Proof of Theorem~\ref{theorem:main}]
By Theorem \ref{theorem:general} we have
$$
\frac{1}{N(R)} \,
\Bigl\lvert \G^{N(R)}_{x_R} \cap (t,\infty)\Bigr\rvert
\to  m(\bigl\{ (a, b) \in \Om: R(a, b) \geq t\bigr\}).
$$
So by the previous lemma
$$
\frac{1}{N(R)}\x
\bigl\lvert \G_R(\La) \cap [t, \infty)\bigr\rvert
\to m(\bigl\{ (a, b) \in \Om: R(a, b) \geq t\bigr\}).
$$
\end{proof}


\subsection{Spacings and statistics}
\label{subsec:statistics}

The equidistribution of periodic points also yields significant further
information on higher-order spacings and statistics for the gap distribution.
We record one representative result on $h$-spacings, and refer the reader
to~\cite[\S1.5]{ACheung} for further results of this type in the setting of
the torus, whose proofs can be easily modified to this setting.

\begin{Theorem}
\label{theorem:hspacing}
Let $h$ be a positive integer, and let $t_1, t_2, \ldots, t_h >0$.  Then the
$h$-spacing distribution
$$
\frac{1}{N(R)}\x
\Bigl\lvert  \Bigr\{ \, 1 \le i \le N(R): R^2 (s^{(i+k)}_R -
s^{(i+k-1)}_R) > t_k,\quad 1 \le k \le h \, \Bigr\} \Bigr\rvert
$$
converges, as $R \to \infty$, to
$$
m(\bigl\{ (a, b) \in \Om:  R(T^j(a, b)) \geq t_j, 0 \le j < h\bigr\})
$$
\end{Theorem}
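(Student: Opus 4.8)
The plan is to repeat, almost verbatim, the reduction used to prove Theorem~\ref{theorem:main}, but tracking $h$ consecutive values of the roof function $R$ along a $T$-orbit rather than a single value. Recall from \S\ref{sec:connect} and the proof of Theorem~\ref{theorem:main} that the renormalized golden L $x_R=a_{-2\log R}x$ is a periodic point of the golden-L BCZ map $T$ of period $N(R)$, and that under this renormalization the scaled slope gaps $R^2\bigl(s^{(i+1)}_R-s^{(i)}_R\bigr)$ of $\s_R$ are exactly the values $R(T^i(x_R))$, listed in orbit order. Since $x_R$ is $T$-periodic, the sequence $\bigl(R(T^i(x_R))\bigr)_{i\in\mathbb Z}$ is periodic of period $N(R)$, and one period of it is, up to $O(h)$ indices near the two boundary slopes $s^{(0)}_R=0$ and $s^{(N(R)-1)}_R=1$, the list of all scaled gaps of $\s_R$. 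Hence the $h$-tuple of consecutive scaled gaps beginning at index $i$ equals $\bigl(R(T^i(x_R)),R(T^{i+1}(x_R)),\dots,R(T^{i+h-1}(x_R))\bigr)$.

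First I would rewrite the $h$-spacing count as a Birkhoff sum. Writing $\chi_t$ for the indicator of the super-level set $R^{-1}([t,\infty))\subset\Om$ as in \S\ref{sec:connect}, set
\[
F=\prod_{k=1}^{h}\bigl(\chi_{t_k}\circ T^{k-1}\bigr),
\]
the indicator of $E_{\mathbf t}=\{(a,b)\in\Om: R(T^{k-1}(a,b))\ge t_k\text{ for }1\le k\le h\}$, which is the set appearing in the statement (after the index shift $j=k-1$). Then, up to an $o(1)$ error absorbing the $O(h)$ boundary and wrap-around indices and the difference between $>$ and $\ge$,
\[
\frac{1}{N(R)}\Bigl\lvert\bigl\{\,1\le i\le N(R): R^2\bigl(s^{(i+k)}_R-s^{(i+k-1)}_R\bigr)>t_k,\ 1\le k\le h\,\bigr\}\Bigr\rvert
=\frac{1}{N(R)}\sum_{i=0}^{N(R)-1}F\bigl(T^i(x_R)\bigr)+o(1).
\]

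Then I would invoke the equidistribution of long periodic orbits from Theorem~\ref{theorem:ergodic}: as $R\to\infty$, the $x_R$ are periodic points of $T$ with periods $N(R)\to\infty$, and $F$ is bounded, so $\frac1{N(R)}\sum_{i=0}^{N(R)-1}F(T^i(x_R))\to\int_\Om F\,dm=m(E_{\mathbf t})$, which is the claimed limit. Running the same computation with the generic-point statement of Theorem~\ref{theorem:ergodic} in place of the periodic-orbit statement gives the analogous $h$-spacing result for $\s_g$ when $x=g\Gamma$ is not $h_s$-periodic, exactly as in \cite[\S1.5]{ACheung}.

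The one point that needs genuine care — and essentially the only work beyond the bookkeeping above — is that $F$ is not continuous: $T$ is merely piecewise linear (with countably many pieces) and $\chi_{t_k}$ jumps across $\partial R^{-1}([t_k,\infty))$, so one must justify applying the bounded-function form of equidistribution to $F$. By Theorem~\ref{theorem:cross}, $R$ is piecewise rational on the three pieces $\Om_1,\Om_\phi,\Om_\infty$ (each defined by linear inequalities), so each $\partial R^{-1}([t_k,\infty))$ is a finite union of real-algebraic arcs and hence $m$-null; since $T$ is locally affine and $m$-preserving, the preimages $(T^{k-1})^{-1}$ of these boundaries, together with the piecewise-linear discontinuity loci of the $T^{k-1}$, are also $m$-null. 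Thus the discontinuity set of $F$ is $m$-null, $F$ is Riemann integrable, and for each $\varepsilon>0$ one can sandwich $F^-_\varepsilon\le F\le F^+_\varepsilon$ by continuous functions with $\int_\Om(F^+_\varepsilon-F^-_\varepsilon)\,dm<\varepsilon$ (the gap between them supported near the null boundary); applying equidistribution for continuous functions to $F^\pm_\varepsilon$ and letting $\varepsilon\to0$ yields the convergence. The same observation — that $m$ assigns zero mass to each hypersurface $\{R(T^{k-1}(\cdot))=t_k\}$ — lets us replace the strict inequalities by non-strict ones in the limit. I expect this measure-zero-boundary argument, together with the index matching in the first paragraph, to be the whole proof; everything else is a transcription of the proof of Theorem~\ref{theorem:main}.
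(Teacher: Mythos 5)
Your proposal is correct and follows essentially the same route as the paper, whose entire proof is the single line ``apply the periodic case of Theorem~\ref{theorem:general} (i.e., the periodic-orbit equidistribution of Theorem~\ref{theorem:ergodic}) to the indicator function of the set $\{(a,b)\in\Om: R(T^j(a,b))\ge t_j,\ 0\le j<h\}$.'' The extra care you take with the $m$-null discontinuity set of that indicator and the $O(h)$ wrap-around indices is detail the paper leaves implicit rather than a genuinely different argument.
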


\begin{proof}
Apply the periodic case of Theorem~\ref{theorem:general} to the indicator
function of the set
$$
\bigl\{(a, b) \in \Om: R(T^j(a, b)) \geq t_j, 0 \le j < h\bigr\}.
$$
\end{proof}


\section{Further questions}

Our method to explicitly compute the gap distribution in this setting leads us
to several natural questions.

\subsection{Real analyticity}

\begin{ques}
Is the gap distribution for a generic (with respect to Masur-Veech measure on
a stratum $\hh(\kk)$ surface real analytic?
\end{ques}

\noindent
This distribution was shown to exist in~\cite{AChaika}, and
in~\cite{Athreya}, a possible method of computing it by constructing a first
return map for the action of $h_s$ on the stratum $\hh(\kk)$ was suggested.
However, explicitly computing this return map (or indeed the return time,
which is the only requirement for the gap distribution) seems difficult.
Perhaps a property like real analyticity is within reach.  Our computation
here shows that the distribution for the golden L is piecewise real-analytic.
 For the torus, this was computed by~\cite{Hall}, and
was real-analytic on 3 pieces.  This leads to the natural

\begin{ques}
Is the gap distribution for all lattice surfaces piecewise real analytic?
\end{ques}

\noindent
We conjecture the answer is yes, and that the number of pieces are some
measure of the `complexity' of the Veech surface.

\subsection{Support at 0}

In~\cite{AChaika} it was shown that lattice surfaces have no small
(normalized) gaps, and that in contrast, that the gap distribution for generic
surfaces has support at $0$.  This leaves open the question:
\begin{ques} Is
there a translation surface whose gap distribution has no support at $0$ but
does have small gaps.  That is, for all $\epsilon >0$, for all $R \gg 0$,
there exists a gap of slopes of saddle connections of length at most $R$ less
than $\epsilon/R^2$, but there is an $\epsilon_0 >0$ so that the proportion of
gaps of size at most $\epsilon_0/R^2$ goes to $0$ as $R \rightarrow \infty$.
\end{ques}

\noindent
A possible set of candidates for such a surface might be \emph{completely
periodic} surfaces which are not lattice surfaces.

\medskip
\centerline{\tikz \draw (0,0) -- (3,0);}


\appendixmode


\section{Explicit formulas for the probability distribution function}
\label{appendix:formula}

\pmp

\begin{Notation}
\tbx
We use a bar to denote the inverse, so $\alphabar$ 
and $\phibar$ denote $\alpha^{-1}$ and $\phi^{-1}$.
\\
\tbx We denote by $\ath$ the inverse hyperbolic tangent function:
$\ath(x)
= (1/2)\ln((1+x)/(1-x))$.
\\
\tbx We denote by $r$ the function $x \mapsto r(x) = \sqrt{1-4 \x x}$.
\end{Notation}

The cumulative distribution function for the gaps in slopes is the function 
$F$ which to $\alpha$ associates the probability $F(\alpha)$ that a gap
is less than $\alpha$.
The probability distribution function $f$ is the derivative of $F$.

We can see $F$ as the sum of three partial cdfs $F_s$
for the zones $\Om_s$, for each $s\in\{1,\phi,\infty\}$, where
$F_s(\alpha) = \area\bigl(\Om_s\cap(R_s(x,y) < \alpha)\bigr)$,
giving the formulas
$F_\infty(\alpha) = \area\bigl(\Om_\infty\cap (xy > \alphabar)\bigr)$,
$F_\phi(\alpha) = \area\bigl(\Om_\phi\cap(x(\phibar x+y)>\alphabar)\bigr)$, 
and $F_1(\alpha) = \area\bigl(\Om_1\cap (x(x+y)>\alphabar)\bigr)$.

\pmp

The different configurations, as $\alpha$ varies, of the intersection of
the hyperbolas $R_s(x,y)=\alpha$ with the domains $\Om_s$ determine
different evaluations of these formulas; which by differentiating 
give the partial pdfs:

\pmp

\begin{longtable}[l]{@{}r@{}l@{}}
\tbx Respectively for:\quad
&
$0 < \alpha < 1$,\quad
$1<\alpha<4\phi$,\quad
$4\phi<\alpha<\phi^4$,\quad
$\phi^4<\alpha$,
\\
\end{longtable}

\begin{longtable}[l]{@{}r@{}l@{}}
\phantom{\tbx}$F_\infty(\alpha)$ equals:\quad
&
$0$,\quad
$1-\alphabar(1+\ln\alpha)$,\quad
$1-\alphabar(1+\ln\alpha-4\ath r(\phi\alphabar))
-\phibar \x r(\phi\alphabar)$,\quad\\
& $3\phibar/2-\alphabar(1+2\ln(\phibar\alpha/2)+
2\ln(1-r(\phi\alphabar)))+(\phibar/2) r(\phi\alphabar)$;
\\
\end{longtable}

\begin{longtable}[l]{@{}r@{}l@{}}
\phantom{\tbx}$f_\infty(\alpha)$ equals:\quad
&
$0$,\quad
$\alphabar^2\ln\alpha$,\quad
$\alphabar^2(\ln\alpha-4\ath r(\phi\alphabar))$,\quad
$\alphabar^2(2\ln(\phibar\alpha/2)+2\ln(1-r(\phi\alphabar)))$.
\\
\end{longtable}

\psp

\begin{longtable}[l]{@{}r@{}l@{}}
\tbx Respectively for:\quad
&
$0<\alpha<\phi$,\quad
$\phi<\alpha<4$,\quad
$4<\alpha<\phi^3$,\quad
$\phi^3<\alpha$,
\\
\end{longtable}

\begin{longtable}[l]{@{}r@{}l@{}}
\phantom{\tbx}$F_\phi(\alpha)$ equals:\quad
&
$0$,\quad
$\phibar-\alphabar(1+\ln{\phibar\alpha})$,\quad
$\phibar-\alphabar(1+\ln{\phibar\alpha}-
4\ath r(\alphabar))- r(\alphabar)$,\quad
$\phibar^4$;
\\
\end{longtable}

\begin{longtable}[l]{@{}r@{}l@{}}
\phantom{\tbx}$f_\phi(\alpha)$ equals:\quad
&
$0$,\quad
$\alphabar^2\ln{\phibar\alpha}$,\quad
$\alphabar^2(\ln{\phibar\alpha}-4\ath r(\alphabar))$,\quad
$0$.
\\
\end{longtable}

\psp

\begin{longtable}[l]{@{}r@{}l@{}}
\tbx Respectively for:\quad
&
$0<\alpha<\phi$,\quad
$\phi<\alpha<4\phibar$,\quad
$4\phibar<\alpha<\phi^2$,\quad
$\phi^3<\alpha$,
\\
\end{longtable}

\begin{longtable}[l]{@{}r@{}l@{}}
\phantom{\tbx}$F_1(\alpha)$ equals:\quad
&
$0$,\quad
$\phibar-\alphabar(1+\ln{\phibar\alpha})$,\quad
$\phibar-\alphabar(1+\ln{\phibar\alpha}-
2\ath r(\phibar\alphabar))-(\phi/2)\x r(\phibar\alphabar)$,\quad
$\phibar^5/2$;
\end{longtable}

\begin{longtable}[l]{@{}r@{}l@{}}
\phantom{\tbx}$f_1(\alpha)$ equals:\quad
&
$0$,\quad
$\alphabar^2\ln{\phibar\alpha}$,\quad
$\alphabar^2(\ln{\phibar\alpha}-2\ath r(\phibar\alphabar))$,\quad
$0$.
\\
\end{longtable}

%
%
%
%
%
%
%
%


\section{Volume computations}
\label{appendix:volume}

Define a measurable partition of $X = \SL(2,\R) / \Gamma$ into $X_1$, 
$X_\phi$, $X_\infty$ where each $X_j$ is the part of $X$
spanned by $\Om_j$ under the flow $(h_s)$,
until its first return to $\Om$.
The complement of the union of the $X_j$'s is
the union of periodic orbits for the flow $(h_s)$,
which has measure zero.
The partial volumes $V_j = \vol(X_j) = \int_{\Om_j} R$
are obtained by integrating the return time function $R(a, b)$
over the domains $\Om_j$.

\pmp
\noindent\tbx Integrating $R$ over $\Om_1$,
\begin{align*}
V_1
& = \int_{a=\phibar}^{1} \int_{b = 1-a\phi}^{\phibar-a} 
\dfrac{1}{a(a+b)}\; da\,db 
= \int_{\phibar}^{1}
\Bigl[\ln(a+b)\Bigr]_{1-a\phi}^{\phibar-a}\;\dfrac{da}{a} 
= \int_{\phibar}^{1}
\Bigl(\ln(\phibar) - \ln(1-a\phibar)\Bigr)\;\dfrac{da}{a} \\
& = -(\ln \phibar)^2 - \int_{\phibar^2}^{\phibar}
\ln(1-t) \; \dfrac{dt}{t} = -(\ln \phi)^2 + \Li_2(\phibar) - \Li_2(\phibar^2),
\end{align*}
where the last step uses
$\ln \phibar = -\ln \phi$ and the definition
$\Li_2(t) = \int_{t}^{0} \ln(1-t) \; dt/t$.

\psp

\noindent\tbx Integrating $R$ over $\Om_\phi$,

\begin{align*}
V_\phi 
& =
\int_{a=\phibar^2}^{\phibar} \int_{b = 1-a\phi}^{\phibar-a\phibar} 
\dfrac{1}{a(a\phibar+b)}\; da\,db
+\int_{a=\phibar}^{1} \int_{b = \phibar-a}^{\phibar-a\phibar} 
\dfrac{1}{a(a\phibar+b)}\; da\,db \\
& =
\int_{\phibar^2}^{\phibar}
\Bigl[\ln(a\phibar+b)\Bigr]_{1-a\phi}^{\phibar-a\phibar}\;\dfrac{da}{a}
+\int_{a=\phibar}^{1}
\Bigl[\ln(a\phibar+b)\Bigr]_{\phibar-a}^{\phibar-a\phibar}\;\dfrac{da}{a} \\
& =
\int_{\phibar^2}^{\phibar}\Bigl(\ln(\phibar)-\ln(1-a)\Bigr)\;\dfrac{da}{a}
+\int_{\phibar}^{1}\Bigl(\ln(\phibar)-\ln(\phibar-a\phibar^2)\Bigr)\;\dfrac{da}{a} \\
& =
(\ln\phibar)(\ln\phibar-\ln\phibar^2)
-\int_{\phibar^2}^{\phibar}\ln(1-t)\;\dfrac{dt}{t}
-\int_{\phibar^2}^{\phibar}\ln(1-t)\;\dfrac{dt}{t} \\
& =
-(\ln\phibar)^2-2\int_{\phibar^2}^{\phibar}\ln(1-t)\;\dfrac{dt}{t}
= -(\ln\phi)^2-2\Li_2(\phibar^2)+2\Li_2(\phibar).
\end{align*}

\psp

\noindent\tbx Integrating $R$ over $\Om_\infty$,
\begin{align*}
V_\infty
& =
\int_{a=0}^{\phibar^2} \int_{b = 1-a\phi}^{1} 
\dfrac{1}{ab}\; da\,db
+\int_{a=\phibar^2}^{1} \int_{b = \phibar-a\phibar}^{1} 
\dfrac{1}{ab}\; da\,db \\
& =
\int_{0}^{\phibar^2}
\Bigl[\ln b\Bigr]_{1-a\phi}^{1}\;\dfrac{da}{a}
+\int_{\phibar^2}^{1}
\Bigl[\ln b\Bigr]_{\phibar-a\phibar}^{1}\;\dfrac{da}{a} \\
& =
-\int_{0}^{\phibar}\ln(1-t)\;\dfrac{dt}{t}
+(\ln\phibar)(\ln\phibar^2)
-\int_{\phibar^2}^{1}\ln(1-a)\;\dfrac{da}{a} \\
& =
2(\ln\phi)^2+\Li_2(\phibar)
-\int_{\phibar^2}^{0}\ln(1-t)\;\dfrac{dt}{t}
-\int_{0}^{1}\ln(1-t)\;\dfrac{dt}{t} \\
& =
2(\ln\phi)^2+\Li_2(\phibar)-\Li_2(\phibar^2)+\Li_2(1).
\end{align*}

\pmp
\noindent
These three partial volumes add up to
$$
V = \Li_2(1) + 4(\Li_2(\phibar) - \Li_2(\phibar^2)),
$$
which, since\quad $\Li_2(1) = \pi^2/6$\quad and\quad
$\Li_2(\phibar) - \Li_2(\phibar^2) = \pi^2/30$,\quad
can be expressed as
$$
V = \pi^2/6+4\x\pi^2/30 = 9\pi^2/30 = 3\pi^2/10.
$$
This is exactly the classically known volume of
$X = \SL(2, \R)/\Gamma$, as should be expected.





\begin{thebibliography}{99}
%
\bibitem{Athreya} J.~S.~Athreya, \textit{Gap Distributions and Homogeneous
Dynamics}, to appear, Proceedings of the ICM Satellite conference on Geometry,
Topology, and Dynamics in Negative Curvature.



\bibitem{AChaika} J.~S.~Athreya and J.~Chaika, \textit{On the distribution of
gaps for saddle connection directions}, Geometric and Functional
Analysis, Volume 22, Issue 6, 1491-1516, 2012.

\bibitem{ACheung} J.~S.~Athreya and Y.~Cheung, \textit{A Poincar\'e section
for horocycle flow on the space of lattices}, International Math Research Notices, 2013.

\bibitem{BCZ} F.~Boca, C.~Cobeli, and A.~Zaharescu, \textit{A conjecture of R.
R. Hall on Farey points.} J. Reine Angew.  Math.  535 (2001), 207 - 236.

\bibitem{BZsurvey} F.~P.~Boca and A.~Zaharescu, \textit{Farey fractions and
two-dimensional tori}, in \textit{Noncommutative Geometry and Number Theory}
(C. Consani, M. Marcolli, eds.), Aspects of Mathematics E37, Vieweg Verlag,
Wiesbaden, 2006, pp.  57-77.



\bibitem{DS} S.~G.~Dani and J.~Smillie, \textit{Uniform distribution of
horocycle orbits for Fuchsian groups}, Duke Math Journal, vol.  51, no.  1,
184-194, 1984.

\bibitem{Davis} D.~Davis, \textit{Cutting sequences, regular polygons, and the Veech group}, Geometriae Dedicata, February 2013, Volume 162, Issue 1, pp 231-261.


\bibitem{DFT} D.~Davis, D.~Fuchs, and S.~Tabachnikov, \textit{Periodic trajectories on the regular pentagon}, Moscow Mathematical Journal,
Volume 11, Number 3, JulyÐSeptember 2011, Pages 439Ð461

\bibitem{Elkies} N.D. Elkies and C.T. McMullen, \textit{Gaps in $\sqrt{n}$ mod
1 and ergodic theory.} Duke Math.  J. 123 (2004), 95-139.

\bibitem{EM} A.~Eskin and H.~Masur, \emph{Asymptotic Formulas on Flat
Surfaces}, Ergodic Theory and Dynam.  Systems, v.21, 443-478, 2001.

\bibitem{EMZ} A.~Eskin, H.~Masur, and A.~Zorich, \emph{Moduli spaces of
abelian differentials: the principal boundary, counting problems, and the
Siegel-Veech constants.} Publ.  Math.  Inst.  Hautes Etudes Sci.  No.  97
(2003), 61--179.

\bibitem{Hall} R.~R.~Hall, \textit{A note on Farey series.} J. London Math.
Soc.  (2) 2 1970 139 - 148.

\bibitem{HSsurvey} P.~Hubert and T.~Schmidt, \textit{An Introduction to Veech Surfaces.} Chapter 6, in: B. Hasselblatt and A. Katok (ed) Handbook of Dynamical Systems, Vol. 1B. Elsevier Science B.V. (2006)

\bibitem{HS} P.~Hubert and T.~Schmidt, \emph{Diophantine approximation on
Veech surfaces}, preprint.  arxiv:1010.3475v1


\bibitem{MS} J.~Marklof and A.~Str\"ombergsson, \emph{The distribution of free
path lengths in the periodic Lorentz gas and related lattice point problems},
Ann.  of Math.  {\bf 172} (2010) 1949 - 2033.

\bibitem{Masur:IET} H.~Masur, \emph{Interval exchange transformations and
measured foliations.}
Ann.  of Math.  (2) 115 (1982), no.  1, 169--200.

\bibitem{Masur} H.~Masur, \emph{The growth rate of trajectories of a quadratic
differential}, Ergodic Theory Dynam.  Systems 10 (1990), no.  1, 151-176.

\bibitem{MasurSmillie} H.~Masur and J.~Smillie, \emph{Hausdorff dimension of
sets of nonergodic measured foliations.} Ann.  of Math.  (2) 134 (1991), no.
3, 455--543.


\bibitem{Sarnak} P.~Sarnak, \textit{Asymptotic behavior of periodic orbits of
the horocycle flow and Eisenstein series.}
Comm.  Pure Appl.  Math.  34 (1981), no.  6, 719 - 739.


\bibitem{nosmalltri} J.~Smillie and B.~Weiss, \emph{Characterizations of
lattice surfaces}, Invent.  Math.  180 (2010), no.  3, 535--557.

\bibitem{UW} C.~Uyanik and G.~Work, \textit{The
distribution of gaps for saddle connections on the octagon}, in preparation.

\bibitem{Veech} W.~Veech, \emph{Siegel measures}.  Ann.  of Math.  (2) 148
(1998), no.  3, 895-944.

\bibitem{Veech2}W.~Veech,\textit{Geometric realizations of hyperelliptic curves}. In Algorithms, fractals, and
dynamics (Okayama/Kyoto, 1992), 217Ð226. Plenum, New York, 1995.

\bibitem{Weyl} H.~Weyl, \textit{\"Uber die Gleichverteilung von Zahlen mod
Eins}, Math.Ann.77(1916), 313-352.  103
%
\end{thebibliography}
\end{document}